\newtheorem{theorem}{Theorem}[section]
\newtheorem{proposition}[theorem]{Proposition}
\newtheorem{lemma}[theorem]{Lemma}
\newtheorem{corollary}[theorem]{Corollary}
\theoremstyle{definition}
\newtheorem{example}[theorem]{Example}
\newtheorem{definition}[theorem]{Definition}
\def\val#1{\vert #1 \vert}
\def\bbv#1{{\bf #1}}
\def\Z{{\mathbb Z}}
\def\Q{{\mathbb Q}}
\def\N{{\mathbb N}}
\begin{document}

\author[P.V. Danchev]{Peter V. Danchev}
\address{Institute of Mathematics and Informatics, Bulgarian Academy of Sciences \\ "Acad. G. Bonchev" str., bl. 8, 1113 Sofia, Bulgaria.}
\email{danchev@math.bas.bg; pvdanchev@yahoo.com}
\author[P.W. Keef]{Patrick W. Keef}
\address{Department of Mathematics, Whitman College, 345 Boyer Avenue, Walla Walla, WA, 99362, United States of America.}
\email{keef@whitman.edu}

\title[Bassian-Finite Abelian Groups]{Bassian-Finite Abelian Groups}
\keywords{Abelian group, co-Hopfian group, Dedekind-finite group, Bassian-finite group}
\subjclass[2020]{20K10, 20K20, 20K21, 20K30}

\maketitle

\begin{abstract} We introduce a new class of Abelian groups which lies strictly between the classes of co-Hopfian groups and Dedekind-finite groups, calling these groups {\it Bassian-finite}. We prove the surprising fact that in the torsion case the Bassian-finite property coincides with the co-Hopficity, thus extending a recent result by Chekhlov-Danchev-Keef in Siber. Math. J. (2026), and we construct a torsion-free Bassian-finite group which is {\it not} co-Hopfian as well as a Dedekind-finite group which is {\it not} Bassian-finite. Some other closely relevant things are also established. E.g., we extend a construction of a countable Butler group that is {\it not} completely decomposable, due to Arnold-Rangaswamy in Boll. Un. Mat. Ital. (2007), to find a Butler group of countably infinite rank which is Bassian-finite, but {\it not} completely decomposable.
\end{abstract}

\section{Introduction and Motivation}\label{one}

Herein, all groups considered will be Abelian and additively written. Unexplained notation and terminology will be standard and can be found, for instance, in \cite{F}. The letter $G$ will denote an arbitrary group whose torsion part will be denoted by $T$ and, if $p$ is a prime, the $p$-torsion component of $G$ will be designed by $T_p$. Of course, $\omega$ denotes the first infinite ordinal and cardinal.

\medskip

Mimicking \cite{BP}, a group $G$ is {\it Dedekind-finite}, provided there exists no isomorphism from $G$ to any proper direct summand of $G$. So, it is quite natural to examine what is the behavior of those group $G$ for which there exists no monomorphism from $G$ to any proper direct summand of $G$. It is immediately seen that every such a group is Dedekind-finite.

On the other hand, we recall the notion introduced in \cite{CDG} of a {\it Bassian group} as being such a group that cannot be embedded in a proper homomorphic image of itself. Thus, it is immediate that each Bassian group possesses the described above new property. Thereby, saving the same terminology, we call the newly defined groups as {\it Bassian-finite}. Therefore, we have the (strict) implications:

\medskip

\noindent{\centerline{Bassian $\Rightarrow$ Bassian-finite $\Rightarrow$ Dedekind-finite.}}

\medskip

So, we are left to consider the exact definition: {\it A group $G$ called Bassian-finite if there exists no monomorphism from $G$ to any proper direct summand of $G$}.

\medskip

In the other vein, imitating \cite{CDK}, we recall also that $G$ is said to be {\it relatively co-Hopfian} if, for all monomorphisms $\phi:G\to G$, $\phi(G)$ is an essential subgroup of $G$. In other words, for all monomorphisms $\phi:G\to G$ and non-zero subgroups $B\leq G$, we must have $\phi(G)\cap B\ne \{0\}$. This is a common extension of the standard notion of a co-Hopfian group: a group is {\it co-Hopfian}, provided it is a group that is not isomorphic to any of its proper subgroups.

\medskip

We now contrast that with the following useful observation.

\begin{proposition}\label{condition}
A group $G$ is Bassian-finite if, and only if, for all monomorphisms $\phi:G\to G$ and non-zero direct summands $B\leq G$, we have $\phi(G)\cap B\ne \{0\}$.
\end{proposition}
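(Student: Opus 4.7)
The plan is to prove both directions by exploiting the standard splitting $G = B \oplus C$ corresponding to a direct summand $B$, and passing between monomorphisms $G \to G$ and monomorphisms $G \to C$ via composition with (or inclusion from) the projection onto the complement.

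For the forward direction, I would assume $G$ is Bassian-finite and argue by contrapositive: suppose there exist a monomorphism $\phi: G \to G$ and a non-zero direct summand $B \leq G$ with $\phi(G) \cap B = \{0\}$. Writing $G = B \oplus C$, let $\pi: G \to C$ denote the projection with kernel $B$. I would then form the composition $\pi \circ \phi: G \to C$ and verify it is a monomorphism: if $(\pi \circ \phi)(g) = 0$, then $\phi(g) \in \ker \pi = B$, hence $\phi(g) \in \phi(G) \cap B = \{0\}$, and injectivity of $\phi$ forces $g = 0$. Since $B \ne 0$, the summand $C$ is proper, and this monomorphism $G \hookrightarrow C$ contradicts Bassian-finiteness.

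For the reverse direction, I would assume the stated intersection condition and again argue by contrapositive: if $G$ is not Bassian-finite, then some monomorphism $\psi: G \to C$ exists, where $C$ is a proper direct summand, say $G = B \oplus C$ with $B \neq 0$. Composing $\psi$ with the inclusion $C \hookrightarrow G$ yields a monomorphism $\widetilde\psi: G \to G$ whose image lies entirely inside $C$. Then $\widetilde\psi(G) \cap B \subseteq C \cap B = \{0\}$, which contradicts the hypothesized condition, since $B$ is a non-zero direct summand.

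There is no real obstacle here; the only point requiring care is the verification that $\pi \circ \phi$ is injective in the forward direction, which uses precisely the hypothesis $\phi(G) \cap B = \{0\}$ together with the injectivity of $\phi$. The proposition is essentially a reformulation of the definition in terms of complementary summands, and both implications are a single short argument each.
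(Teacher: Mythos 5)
Your proposal is correct and follows essentially the same route as the paper: both directions pass between a monomorphism $G\to G$ missing a summand $B$ and a monomorphism into the complementary summand, via the projection with kernel $B$ in one direction and the inclusion in the other. You merely make explicit the injectivity check for $\pi\circ\phi$ that the paper leaves as "easily follows."
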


\begin{proof}
Suppose $G$ fails to be Bassian-finite. Then, there is a decomposition $G=A\oplus B$, where $B\ne \{0\}$ and $\phi:G\to A$ is a monomorphism. In particular, $\phi(G)\cap B\leq A\cap B=\{0\}$.

Conversely, if the given condition fails, we can find $\phi:G\to G$ which is a monomorphism and $B\ne \{0\}$ which is a direct summand of $G$ such that $\phi(G)\cap B=\{0\}$. Say $G=A\oplus B$ and let $\pi:G\to A$ be the usual projection onto the direct summand $A$. It easily follows from $\phi(G)\cap B=\{0\}$ that $\pi \circ \phi=: \phi':G\to A$ is also a monomorphism and $\phi'(G)\leq A$, showing that $G$ is not Bassian-finite, as needed.
\end{proof}

Since any non-zero direct summand is always a non-zero subgroup, we derive the following direct consequence.

\begin{corollary}\label{relativelycH}
If $G$ is relatively co-Hopfian, then it is Bassian-finite.
\end{corollary}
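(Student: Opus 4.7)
The plan is essentially a one-line invocation of Proposition \ref{condition}, since the result follows by comparing the quantifier ranges in the two characterizations. I would proceed as follows.

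First I would fix a monomorphism $\phi:G\to G$ and a non-zero direct summand $B\leq G$. Since every direct summand of $G$ is in particular a subgroup of $G$, the relatively co-Hopfian hypothesis applies directly: it asserts that $\phi(G)\cap B\ne\{0\}$ for \emph{every} non-zero subgroup $B$, so in particular this holds for the non-zero direct summand chosen. Then I would close the argument by appealing to Proposition \ref{condition}, which tells us that verifying the intersection property on non-zero direct summands (rather than all non-zero subgroups) is equivalent to $G$ being Bassian-finite.

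There is no real obstacle here; the corollary is the observation that the Bassian-finite condition of Proposition \ref{condition} quantifies over a smaller class of test subgroups than the relatively co-Hopfian condition does. The only thing one might emphasize for the reader is that the two characterizations are being compared in the forms where both are phrased via intersections $\phi(G)\cap B$, so the implication is immediate once Proposition \ref{condition} is in hand.
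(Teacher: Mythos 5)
Your proposal is correct and is exactly the paper's argument: the corollary is derived from Proposition~\ref{condition} by noting that every non-zero direct summand is in particular a non-zero subgroup, so the relatively co-Hopfian condition implies the Bassian-finite criterion. Nothing is missing.
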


So, our (strict) inclusions listed above take the more precise form:

\medskip

\noindent{\centerline{Bassian $\Rightarrow$ (Relatively) co-Hopfian $\Rightarrow$ Bassian-finite $\Rightarrow$ Dedekind-finite.}}

\medskip

Our work is organized in what follows thus: In next section, we state and prove our principal results some of which sound like this -- in Proposition~\ref{torsion} we establish that the property Bassian-finite is equivalent to being co-Hopfian. In the subsequent Proposition~\ref{indec} we show that torsion-free co-Hopfian groups are always Bassian-finite. Next, in Proposition~\ref{cdconditions}, we prove that completely decomposable torsion-free groups which satisfy the ascending type condition are Bassian-finite, and vice versa, and in Example~\ref{dedfin} we construct a completely decomposable Dedekind-finite group that is {\it not} Bassian-finite. Furthermore, in Example~\ref{basfin}, we exhibit a torsion-free Bassian-finite group that is {\it not} co-Hopfian. Defining the notion of "finite injective rank", we succeed to build in our final Example~\ref{but} the existence of a countably infinite Butler group that is Bassian-finite, but {\it not} completely decomposable. We finish off our research with the closely related concept of being "co-Bassian-finite".

\section{Results and Examples}

We first study in detail the situation for torsion groups. The first statement is handled in the usual way, so we omit the details.

\begin{proposition}\label{torsionequivalent} Let $T=\bigoplus_{p\in \mathcal P} T_p$ be a torsion group.
Then, $T$ is Bassian-finite if, and only if, each direct component $T_p$ is Bassian-finite.
\end{proposition}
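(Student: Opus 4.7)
The plan is to use the characterization of Bassian-finiteness from Proposition~\ref{condition} together with the standard fact that every monomorphism (indeed every homomorphism) and every direct sum decomposition of a torsion group respects the primary decomposition $T = \bigoplus_p T_p$. Concretely, if $T = A \oplus B$ then $A = \bigoplus_p A_p$ and $B = \bigoplus_p B_p$ with $T_p = A_p \oplus B_p$ for every prime $p$, and if $\phi : T \to T$ is a homomorphism then $\phi(T_p) \subseteq T_p$; these are the two facts that do essentially all of the work.

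For the forward implication I would argue by contrapositive: assume that some $T_p$ fails to be Bassian-finite, so by Proposition~\ref{condition} there is a direct summand decomposition $T_p = A_p \oplus B_p$ with $B_p \neq \{0\}$ and a monomorphism $\phi_p : T_p \to T_p$ with $\phi_p(T_p) \cap B_p = \{0\}$. Then $T = (A_p \oplus \bigoplus_{q \neq p} T_q) \oplus B_p$ exhibits $B_p$ as a non-zero direct summand of $T$, and the map $\phi := \phi_p \oplus \mathrm{id}_{\bigoplus_{q\neq p} T_q} : T \to T$ is a monomorphism with $\phi(T) \cap B_p = \phi_p(T_p) \cap B_p = \{0\}$, violating Proposition~\ref{condition}.

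For the reverse implication, assume that each $T_p$ is Bassian-finite and that $T$ fails to be; pick a monomorphism $\phi : T \to T$ and a non-zero direct summand $B$ of $T$ with $\phi(T) \cap B = \{0\}$. Choose a prime $p$ such that $B_p \neq \{0\}$; writing $T_p = A_p \oplus B_p$ as above, $B_p$ is a non-zero direct summand of $T_p$. Since $\phi$ sends $T_p$ into $T_p$, the restriction $\phi_p := \phi|_{T_p} : T_p \to T_p$ is a monomorphism, and
\[
\phi_p(T_p) \cap B_p \subseteq \phi(T) \cap B = \{0\},
\]
contradicting Proposition~\ref{condition} applied to $T_p$.

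I do not expect any substantive obstacle here; the paper itself flags the result as handled "in the usual way." The only point that requires a moment of care is the verification that a direct summand decomposition of $T$ is compatible with the primary decomposition (so that $B_p$ is actually a direct summand of $T_p$) and that $\phi$ maps each $T_p$ into itself — both being immediate from the fact that in a torsion group $T_p$ is characterised purely by the order-of-element property that $p^n x = 0$ for some $n$, hence is preserved under any homomorphism and in particular under any projection onto a summand.
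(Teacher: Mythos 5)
Your proof is correct and is precisely the standard argument the paper alludes to when it says the result is "handled in the usual way" and omits the details: both directions reduce to Proposition~\ref{condition} via the facts that the primary components are fully invariant and that direct decompositions of a torsion group respect the primary decomposition. (For the forward direction one could also just cite that each $T_p$ is a direct summand of $T$ and that Bassian-finiteness passes to direct summands, but your explicit contrapositive argument amounts to the same thing.)
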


Our next rather curious assertion shows that, for $p$-torsion groups, the converse of Corollary~\ref{relativelycH} is valid, thus expanding the corresponding result from \cite{CDK}.

\begin{proposition}\label{torsion}
If $T=T_p$ is a $p$-group, then the following items are equivalent:

(a) $T$ is Bassian-finite.

(b) $T$ is co-Hopfian.

(c) $T$ is relatively co-Hopfian.
\end{proposition}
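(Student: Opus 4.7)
The plan is to prove the cycle $(b)\Rightarrow(c)\Rightarrow(a)\Rightarrow(b)$. The first implication is immediate: if $T$ is co-Hopfian, every monomorphism of $T$ is surjective, so the image equals $T$ itself, which is tautologically essential. The second implication $(c)\Rightarrow(a)$ is the content of Corollary~\ref{relativelycH}. All the genuine work is in the new implication $(a)\Rightarrow(b)$.

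For $(a)\Rightarrow(b)$, I would exploit the classical description of a $p$-group as co-Hopfian precisely when it has the form $D\oplus F$ with $D$ divisible of finite rank and $F$ finite. I would first verify that the Bassian-finite property is inherited by direct summands: if $T=A\oplus C$ is Bassian-finite and some monomorphism $\psi\colon A\to A$ lands inside a proper summand $A'$ of $A$ with complement $A''\neq\{0\}$, then $\psi\oplus\mathrm{id}_C$ maps $T$ into the proper summand $A'\oplus C$ of $T$, a contradiction. Applied to the canonical decomposition $T=D\oplus R$ into maximal divisible and reduced parts, this shows both $D$ and $R$ are Bassian-finite.

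The divisible piece is easily disposed of: if $D$ had infinite rank it would split as $D=D_1\oplus D_2$ with $D_1\cong D$ and $D_2\neq\{0\}$, yielding a monomorphism $D\hookrightarrow D_1\subsetneq D$ onto a proper summand. So $D$ has finite rank. The main task is to show the reduced Bassian-finite $p$-group $R$ is finite. I would argue the contrapositive via the Ulm--Kaplansky invariants $f_n(R)$. If some $f_n(R)$ is infinite, $R$ admits a direct summand isomorphic to $(\mathbb{Z}/p^{n+1}\mathbb{Z})^{(\omega)}$, and the obvious shift endomorphism on this summand, extended by the identity on its complement, is a monomorphism of $R$ whose image misses one copy of $\mathbb{Z}/p^{n+1}\mathbb{Z}$, which is a nonzero direct summand. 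If every $f_n(R)$ is finite but infinitely many are nonzero, so that $R$ has an unbounded basic subgroup with finite homogeneous layers, I would extract from $R$ itself a direct summand of the shape $\bigoplus_i \mathbb{Z}/p^{n_i}\mathbb{Z}$ with $n_1<n_2<\cdots$, and then send $\mathbb{Z}/p^{n_i}\mathbb{Z}\hookrightarrow \mathbb{Z}/p^{n_{i+1}}\mathbb{Z}$ by multiplication by $p^{n_{i+1}-n_i}$; extended by the identity on the complement this misses the first $\mathbb{Z}/p^{n_1}\mathbb{Z}$. With $R$ finite and $D$ divisible of finite rank, $T$ has the classical co-Hopfian form.

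The main technical obstacle is the last case above: a basic subgroup of $R$ is pure but in general not a direct summand (torsion-complete $p$-groups show this dramatically), so extracting a genuine unbounded direct-sum-of-cyclics summand inside $R$ itself is not automatic from having an unbounded basic subgroup. I expect to handle this by iteratively splitting off individual cyclic pure-bounded summands using Kulikov's criterion, controlling the complement at each step so that the remaining group still has unbounded basic subgroup and so that the countable direct sum one assembles is pure, hence a summand, in $R$.
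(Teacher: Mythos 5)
Your cycle $(b)\Rightarrow(c)\Rightarrow(a)$ and the reductions at the start of $(a)\Rightarrow(b)$ (heredity to summands, the divisible part having finite rank, the case of an infinite Ulm invariant giving a $(\Z/p^{n+1}\Z)^{(\omega)}$ summand) are all fine and match reductions the paper also makes. But the case you correctly identify as the crux --- $R$ reduced, all Ulm invariants finite, basic subgroup unbounded --- is not closed by your proposed fix, and the gap is not merely technical. The principle ``countable pure direct sum of cyclics, hence a summand'' is false: Kulikov's criterion only promotes \emph{bounded} pure subgroups to summands. Worse, the summand you want genuinely need not exist: take $R$ to be the torsion completion of $B=\bigoplus_{n\geq 1}\Z/p^n\Z$. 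This is a reduced unbounded $p$-group with every $f_n(R)=1$, yet every direct summand of a torsion-complete group is torsion-complete, and a torsion-complete direct sum of cyclics is bounded; so $R$ has no unbounded direct-sum-of-cyclics summand at all. Since this $R$ is exactly the kind of group your case must handle (it is infinite, reduced, not co-Hopfian, and the proposition asserts it is not Bassian-finite), your argument breaks precisely where all the content lies.

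The paper avoids constructing a monomorphism from the structure theory altogether. It fixes an injective non-surjective endomorphism $\phi$ (which exists since $T$ is not co-Hopfian) and shows directly that $\phi(T)$ must miss a nonzero \emph{cyclic} summand: one proves there is a socle element $y\in T[p]\setminus\phi(T)$ of \emph{finite} height, and then $y$ spans the socle of a cyclic summand $B$ with $B\cap\phi(T)=0$, so Proposition~\ref{condition} applies. The finite-height element is produced by examining the induced maps $\phi_j$ on the finite quotients $T[p]/(p^jT)[p]$: if some $\phi_j$ is not surjective one reads off such a $y$ immediately; if all are surjective they are bijections by finiteness, so $\phi$ preserves finite heights on the socle, $\phi(T)$ is pure, and a short adjustment by an element of finite height again yields the required $y$. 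If you want to keep your contrapositive-by-structure strategy, you would need to replace the ``extract an unbounded cyclic summand'' step by an argument of this height-theoretic kind (or restrict it to direct sums of cyclics, where your splitting-off works, and treat the general reduced case separately).
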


\begin{proof} That (b) implies (c) is automatic, and that (c) implies (a) is precisely Corollary~\ref{relativelycH}.

To show (a) yields (b), we argue by contrapositive. Supposing $T$ is not co-Hopfian, we wish to show $T$ also fails to be Bassian-finite. If $X\neq 0$, then it is readily checked that $X^{(\omega)}$ is not Bassian-finite (in fact, since $X^{(\omega)}\cong X\oplus X^{(\omega)}$, it is not even Dedekind-finite). Therefore, we may assume $T$ does not have a direct summand isomorphic to such a direct sum. It now follows that the Ulm function of $T$ satisfies the property $f_T(n)$ is finite whenever $n<\omega $ or $n=\infty$.

However, since $G$ is not co-Hopfian, there is an endomorphism $\phi:G\to G$ that is injective, but not surjective.
We claim that there is an element $y\in G[p]\setminus \phi(G)$ such that $\val y<\omega$. Before verifying this claim, we note how it forces the full result: There is, clearly, a cyclic direct summand $B=\langle w\rangle$ with $B[p]=\langle y\rangle$. Consequently, $B\cap \phi(G)=\{0\}$, so that thanks to Proposition~\ref{condition}, $B$ is not Bassian-finite, completing the whole proof.

For every $j<\omega$, let
$$
S_j=G[p]/(p^j G)[p].
$$
Evidently, for all $j<\omega$, $\phi$ induces an endomorphism
$$
                            \phi_j: S_j \to S_j.
$$

Suppose that, for some $j<\omega $, that $\phi_j$ is not surjective. If $$\overline{0}\ne y+(p^j G)[p]\not\in \phi_j(S_j),$$ then it follows that $\val y<j<\omega$ and hence $$y\not\in \phi(G[p])=\phi(G)\cap G[p]$$ establishes our claim.

On the other hand, suppose that, for all $j<\omega$, that $\phi_j$ is surjective. Since, by what we have shown above, $f_G(n)$ is finite for all $n<\omega$, it follows that each $S_j$ is finite. It also follows that each $\phi_j$ is injective too. In other words, for $x\in G[p]$ and $j<\omega$, that $\val x<j$ implies $\val {\phi(x)}<j$. This readily assures that $\phi(x)$ preserves all {\it finite} $p$-heights of elements of $G[p]$. This gives that $\phi(G)$ will be a {\it pure} subgroup $G$. If $G[p]=\phi(G[p])$, then the purity of $\phi(G)$ in $G$ would imply that $\phi(G)=G$, which would contradict that $\phi$ is {\it not} surjective.

Therefore, there is an $x\in G[p]\setminus \phi(G[p])$; we want to verify that the existing element $y\in G[p]\setminus \phi(G[p])$ can be found of finite $p$-height, i.e., $\val y=n<\omega$: Certainly, if $\val x$ is already finite, we may just let $y=x$. Otherwise, $\val x\geq \omega$. If $n<\omega$ with $f_G(n)\ne 0$, then let $z'\in (p^n G)[p]/(p^{n+1} G)[p]$. It thus follows that $$\phi(z')=: z\in \phi((p^n G)[p])\setminus (p^{n+1} G)[p].$$ So, $\val z=n$, and if we let $y=x+z$, then $y\in G[p]\setminus \phi(G)$ with $\val y=n<\omega$, as required.
\end{proof}

That is why, with a combination of Propositions~\ref{torsionequivalent} and \ref{torsion} at hand, one can completely describe Bassian-finite groups in terms of co-Hopfian groups.

\medskip

We now turn to non-torsion groups in which case the structural is rather more complicated than usual. We first begin with the following simple but helpful fact.

\begin{proposition} Suppose $G$ is a group with a maximal torsion subgroup $T$.

(a) If $T$ and $G/T$ are both Bassian-finite, then $G$ is Bassian-finite too.

(b) If $G$ splits, that is, $G\cong T\oplus (G/T)$, then $G$ is Bassian-finite if, and only if, both $T$ and $G/T$ are Bassian-finite.
\end{proposition}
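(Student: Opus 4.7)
I will prove (a) by contrapositive. Suppose $G$ is not Bassian-finite; then by (the proof of) Proposition~\ref{condition}, there is a decomposition $G = A \oplus B$ with $B \ne \{0\}$ and a monomorphism $\phi : G \to A$. The torsion subgroup splits accordingly as $T = T_A \oplus T_B$, where $T_A = A \cap T$ and $T_B = B \cap T$; moreover, since homomorphisms send torsion to torsion, $\phi(T) \subseteq T \cap A = T_A$. The argument now splits on whether $T_B$ vanishes.

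If $T_B \ne \{0\}$, then $\phi|_T : T \to T_A$ is already a monomorphism of $T$ onto a proper direct summand of itself, so $T$ is not Bassian-finite. If instead $T_B = \{0\}$, then $B$ is a non-zero torsion-free subgroup of $G$, and $\phi$ descends to a map $\bar\phi : G/T \to A/T_A$. This induced map is still injective: if $\phi(g) \in T$ has order $n$, then $\phi(ng) = 0$, whence $g \in T$ by injectivity of $\phi$. Since $G/T = (A/T_A) \oplus (B/T_B) = (A/T_A) \oplus B$ with $B \ne \{0\}$, this exhibits $G/T$ as not Bassian-finite. Either outcome contradicts the hypothesis of (a).

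For (b), the forward direction is immediate from (a). For the converse, fix a splitting $G = T \oplus H$ with $H \cong G/T$, and suppose that one of $T$ or $H$ fails to be Bassian-finite. If, say, $T = T_1 \oplus T_2$ with $T_2 \ne \{0\}$ and $\phi : T \to T_1$ is a monomorphism, then $\phi \oplus \mathrm{id}_H : G \to T_1 \oplus H$ is a monomorphism of $G$ into the proper direct summand $T_1 \oplus H$, so $G$ is not Bassian-finite; the case where $H$ fails is completely symmetric.

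The only mildly delicate point is the $T_B = \{0\}$ case of (a), where one must verify that $\bar\phi$ is still injective, which uses that $\phi$ is a monomorphism and not merely a homomorphism. Everything else is bookkeeping on how monomorphisms and direct sum decompositions interact with the canonical torsion / torsion-free-quotient sequence.
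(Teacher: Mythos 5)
Your proof is correct and follows essentially the same route as the paper: the decomposition $T=T_A\oplus T_B$, the restriction $\phi|_T$, and the induced injection $\bar\phi:G/T\to A/T_A$ are exactly the paper's ingredients for (a), merely recast in contrapositive form with an explicit case split, and your argument for (b) is the standard "direct summands inherit the property" observation the paper also invokes. No gaps.
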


\begin{proof}
For (a), let $G=A\oplus B$ and suppose $\phi:G\to A$ is a monomorphism; we thus need to show $B=\{0\}$. We have a decomposition
$$
                     T=T_A\oplus T_B.
$$
Note that $\phi$ restricts to a monomorphism $\phi_T:T\to T_A$. Therefore, since $T$ is Bassian-finite, we must have $T_B=\{0\}$, which means that $T=T_A\leq A$.

Now, it is plainly checked that the injection $\phi:G\to A$ induces another injection
$\overline \phi:G/T\to A/T_A$. Since
$$
            G/T\cong (A/T_A)\oplus (B/T_B)= (A/T_A)\oplus B
$$
is Bassian-finite, we can conclude that $B=\{0\}$, as desired.

For (b), sufficiency is an immediate consequence of (a), and necessity follows from the fact that a direct summand of a Bassian-finite group retains that property, as wanted.
\end{proof}

We have described in Proposition~\ref{torsion} the torsion Bassian-finite groups (or at least we have reduced them to the case of co-Hopfian groups). In the torsion-free case, we obtain some results, but they are unfortunately still far from the asked general ones.

\begin{proposition}\label{indec} Suppose $G$ is a torsion-free group. In any of the following cases, $G$ will be Bassian-finite:

(a) $G$ has finite rank.

(b) $G$ is indecomposable.

(c) $G$ is co-Hopfian.
\end{proposition}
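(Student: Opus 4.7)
The plan is to verify Bassian-finiteness directly from the definition in each of the three cases; all three arguments are rather short. Recall that a failure of Bassian-finiteness consists of a decomposition $G = A\oplus B$ with $B\neq \{0\}$ together with a monomorphism $\phi:G\to A$, so in each case I aim to rule out the existence of such data.

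For (a), I would exploit the additivity of torsion-free rank across direct sums. The monomorphism $\phi:G\to A$ forces $\operatorname{rank}(G)\leq \operatorname{rank}(A)$, while the decomposition yields $\operatorname{rank}(G)=\operatorname{rank}(A)+\operatorname{rank}(B)$; combining these gives $\operatorname{rank}(B)=0$, and since $B$ inherits torsion-freeness from $G$, this forces $B=\{0\}$. For (b), indecomposability immediately precludes any decomposition $G=A\oplus B$ with both summands nonzero; the only remaining possibility is $A=\{0\}$ and $B=G$, but a monomorphism $G\to \{0\}$ can exist only when $G=\{0\}$, which is vacuously Bassian-finite.

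For (c), I would compose $\phi:G\to A$ with the inclusion $\iota: A\hookrightarrow G$ to obtain a monomorphism $\iota\circ\phi: G\to G$ whose image lies inside $A$; since $B\neq\{0\}$, the subgroup $A$ is a proper subgroup of $G$. Co-Hopficity demands, however, that every injective endomorphism of $G$ be surjective, contradicting $(\iota\circ\phi)(G)\subseteq A\subsetneq G$, and thereby forcing $B=\{0\}$. None of the three arguments presents a real obstacle; if anything, the only mild subtlety is the bookkeeping between ``proper direct summand'' (the Bassian-finite notion) and ``proper subgroup'' (the co-Hopfian notion), which is handled in (c) simply by passing through the inclusion $\iota$.
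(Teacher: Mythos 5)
Your proof is correct and for part (a) is essentially identical to the paper's argument (rank additivity forcing $r(B)=0$ and hence $B=\{0\}$ by torsion-freeness); the paper omits the proofs of (b) and (c) as straightforward, and the arguments you supply for them are exactly the expected ones.
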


\begin{proof}
Regarding (a), suppose $G=A\oplus B$ and $\phi:G\to A$ is a monomorphism; we need to show $B=\{0\}$. We deduce
$$
         r(G)=r(\phi(G))\leq r(A)\leq r(G).
$$
Consequently, $r(A)=r(G)$, so that $r(B)=\{0\}$, i.e., $B=\{0\}$, as claimed.

Points (b) and (c) are straightforward, so we drop off the necessary arguments.
\end{proof}

We have seen above that if a group is relatively co-Hopfian, then it is Bassian-finite, and in the case of torsion groups, the converse holds as well. In addition, any torsion-free group of finite rank will be both relatively co-Hopfian and Bassian-finite. Likewise, a free group is Bassian-finite precisely when it has finite torsion-free rank.

\medskip

The following assertion illustrates that this converse also holds for torsion-free groups that are completely decomposable (i.e., when they are a direct sum of groups of rank 1).

\begin{proposition}\label{cdconditions}
Suppose $G=\bigoplus_{i\in I}A_i$ is a completely decomposable torsion-free group, where each $A_i$ has rank 1 and type $\tau_i$. Then, the following conditions are equivalent:

(a) $G$ is Bassian-finite.

(b) $G$ satisfies the ascending type condition.

(c) $G$ is relatively co-Hopfian.
\end{proposition}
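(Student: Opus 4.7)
The plan is to establish the cyclic chain $(c) \Rightarrow (a) \Rightarrow (b) \Rightarrow (c)$. The first step is immediate from Corollary~\ref{relativelycH}, so the substantive work lies in the remaining two implications.

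For $(a) \Rightarrow (b)$, I would argue by contrapositive. If the ascending type condition fails, then there exist distinct indices $i_1, i_2, \ldots \in I$ with $\tau_{i_1} \leq \tau_{i_2} \leq \cdots$. For each $k \geq 1$, the relation $\tau_{i_k} \leq \tau_{i_{k+1}}$ provides an embedding $\psi_k : A_{i_k} \hookrightarrow A_{i_{k+1}}$; define $\phi : G \to G$ by $\phi|_{A_{i_k}} = \psi_k$ for $k \geq 1$ and $\phi|_{A_j} = \operatorname{id}_{A_j}$ for every $j \notin \{i_k\}_{k \geq 1}$. Injectivity is routine: if $\phi(g) = 0$ for $g = \sum a_i$, reading off the $A_j$-component for $j \notin \{i_k\}$ forces $a_j = 0$, while the $A_{i_{k+1}}$-component isolates $\psi_k(a_{i_k})$ (note that $\phi$ sends $A_{i_{k+1}}$ into $A_{i_{k+2}}$ rather than back into $A_{i_{k+1}}$), so each $a_{i_k}$ vanishes as well. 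Because nothing is mapped into $A_{i_1}$, we get $\phi(G) \cap A_{i_1} = \{0\}$, and Proposition~\ref{condition} then yields that $G$ is not Bassian-finite.

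For $(b) \Rightarrow (c)$, assume the ascending type condition. Let $\phi : G \to G$ be a monomorphism and $0 \neq B \leq G$; pick $0 \neq b \in B$, and let $F \subseteq I$ be its finite support. The key idea is to enlarge $F$ to a finite $\phi$-invariant set $F^*$. Writing $\phi(a) = \sum_j \phi_{ji}(a)$ for $a \in A_i$, the set $\operatorname{supp}_\phi(i) = \{j : \phi_{ji} \neq 0\}$ is finite (since $\phi(a_i)$ has finite support for any nonzero $a_i$), and every $j \in \operatorname{supp}_\phi(i)$ satisfies $\tau_j \geq \tau_i$ because $\phi_{ji} : A_i \to A_j$ is a nonzero map between rank-one torsion-free groups. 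Setting $F_0 = F$ and $F_{n+1} = F_n \cup \bigcup_{i \in F_n} \operatorname{supp}_\phi(i)$, I view $F^* = \bigcup_n F_n$ as a forest rooted at $F_0$, where the children of $i$ are the newly-added elements of $\operatorname{supp}_\phi(i)$. This forest is finitely branching, and every root-to-descendant path is a sequence of pairwise distinct indices with weakly ascending types. The ascending type condition rules out any such infinite path, so by K\"onig's lemma the forest---and hence $F^*$---is finite.

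Consequently $G_{F^*} := \bigoplus_{i \in F^*} A_i$ is a finite-rank torsion-free group with $\phi(G_{F^*}) \subseteq G_{F^*}$, so $\phi|_{G_{F^*}}$ is a monomorphism of a finite-rank torsion-free group. By the rank argument underlying Proposition~\ref{indec}(a), the image $\phi(G_{F^*})$ has full rank in $G_{F^*}$ and is therefore essential, so some nonzero multiple $Nb$ of $b$ lies in $\phi(G_{F^*}) \subseteq \phi(G)$; since $B$ is closed under integer multiplication, $Nb \in B$ as well, yielding $\phi(G) \cap B \neq \{0\}$. The principal obstacle is the finiteness of $F^*$, which is the sole place where the ascending type condition does genuine work; the argument hinges on combining finite branching (from $\phi$-images having finite support) with bounded path length (from ATC applied to distinct-index weakly ascending sequences), without which K\"onig's lemma could not be invoked.
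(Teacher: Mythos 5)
Your proof is correct, and it is worth separating what matches the paper from what does not. Two of your three implications coincide with the paper's: $(c)\Rightarrow(a)$ is Corollary~\ref{relativelycH} in both treatments, and your contrapositive for $(a)\Rightarrow(b)$ --- extracting infinitely many distinct indices with weakly ascending types and building the shift monomorphism whose image misses $A_{i_1}$, then invoking Proposition~\ref{condition} --- is exactly the paper's construction (the paper writes $\tau_i\geq\tau_{i+1}$ for the extracted summand, but since it then uses monomorphisms $A_i\to A_{i+1}$ it clearly intends $\tau_i\leq\tau_{i+1}$, i.e., your reading). Where you genuinely diverge is the remaining link: the paper disposes of $(b)\Leftrightarrow(c)$ by citing \cite[Proposition~2.35]{CDK}, whereas you prove $(b)\Rightarrow(c)$ from scratch. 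Your argument --- closing the finite support of a chosen $0\neq b\in B$ under $\phi$, observing that a nonzero component $\phi_{ji}\colon A_i\to A_j$ between rank-one torsion-free groups forces $\tau_i\leq\tau_j$ and that the set of such $j$ is finite (here you correctly, if tacitly, use that this set does not depend on which nonzero element of the rank-one group $A_i$ you test), and then combining finite branching with the ascending type condition via K\"onig's lemma to get a finite $\phi$-invariant summand $G_{F^*}$ in which the full-rank image is essential --- is sound, and it has the merit of making the proposition self-contained where the paper leans on an external reference. Note that your reading of the ascending type condition (no infinite sequence of \emph{distinct} indices with weakly ascending types, rather than merely no strictly ascending chain of types) is the one under which the proposition is actually true, since a homogeneous completely decomposable group of infinite rank must be excluded by condition (b); your two implications use this reading consistently.
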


\begin{proof} The equivalence of (b) and (c) is exactly \cite[Proposition~2.35]{CDK}. To show they are equivalent to (a), notice that we have already observed that (c) leads to (a). Conversely, suppose that (b) fails. Then, there is a direct summand, say $A\leq G$, such that $A=\oplus_{i\in \N} A_i$, where $\tau_i\geq \tau_{i+1}$ for each $i\in \N$. If we can prove that $A$ fails to be Bassian-finite, then the same holds for $G$, establishing this implication. To that end, for each $i\in \N$ there is apparently a monomorphism $A_i\to A_{i+1}$. These can all be combined into a single monomorphism $\phi:A\to A$ such that $A_1\cap \phi(A)=0$. From this, it follows at once that $A$, and hence $G$, fails to be Bassian-finite, as asserted.
\end{proof}

With the aid of the last statement, we next exhibit the following construction.

\begin{example}\label{dedfin} There is a completely decomposable torsion-free group that is Dedekind-finite, but {\it not} Bassian-finite.
\end{example}

\begin{proof} Suppose, for each $i<\omega$, we have a torsion-free group $A_i$ of rank-one and type $\tau_i$, where
$
               \tau_0<\tau_1<\tau_2<\cdots.
$
Letting $G=\bigoplus_{i<\omega} A_i$, Proposition~\ref{cdconditions} tells us that $G$ is not Bassian-finite. 

On the other hand, if $G\cong G\oplus H$, where $H\ne 0$, then one sees that $H\cong  \bigoplus_{i\in J} A_i$, where $\emptyset\ne J\subseteq \omega$. However, if $i\in J$, then $G$ has one direct summand isomorphic to $A_i$, whereas $G\oplus H$ has (at least) two. This contradiction unambiguously shows that $G$ is Dedekind-finite, as pursued.
\end{proof}

The following strategically construction demonstrates that the converse of Corollary~\ref{relativelycH}, though valid for torsion-free groups of finite rank and torsion-free groups that are completely decomposable, does {\it not} extend to arbitrary torsion-free groups of infinite rank. Concretely, the following holds:

\begin{example}\label{basfin}
There is a torsion-free group $G$ that is Bassian-finite, but {\it not} relatively co-Hopfian.
\end{example}

\begin{proof}
For some prime $p$, let $\hat \Z_p$ denote the (additive group of the) ring consisting of all $p$-adic integers. Let $\alpha\in \hat\Z_p$ denote some element that is transcendental over $\Q\leq \Q\hat \Z_p$ (denoting the field of quotients of $\hat \Z_p$). Set
$$
            A:=\Z[\alpha]=\{b_0+b_1\alpha +b_2\alpha^2 +\cdots + b_k \alpha^k, k\in \N, b_i\in \Z\}.
$$
Since $\alpha$ is transcendental, one inspects that $A$ will be isomorphic to the ring of polynomials $\Z[x]$ in the indeterminate $x$.
Finally, put
$$
            G:=\{\beta\in \hat\Z_p: p^j \beta \in A\ \ {\rm for\ some\ }j\in \N\}
$$
be the $p$-pure closure of $A$.

If $\phi$ is any endomorphism of $G$, then since $G$ is checked to be dense and Hausdorff in the $p$-adic topology of $\hat \Z_p$, $\phi$ must uniquely extend to a homomorphism $\hat \Z_p\to \hat \Z_p$. And since $\gamma:=\phi(1)\in G$, we must have $\phi(\beta)=\beta\phi(1)=\beta\cdot \gamma$ for all $\beta\in G$. Therefore, the endomorphism ring of $G$ is a subring of $\hat \Z_p$ (indeed, it equals $G$, that is a fact we do not need in the sequel).

Consider now the endomorphism of $G$ defined by $\alpha(z)=\alpha \cdot z$ for all $z\in G$. Since $\hat \Z_p$ is an integral domain, this multiplication by $\alpha$ is known to be injective. And since $\langle 1\rangle \cap \alpha G=\{0\}$, the map $\alpha G$ is not essential in $G$, so that $G$ fails to be relatively co-Hopfian in accordance with the definition alluded to above.

However, since again $\hat \Z_p$ is an integral domain, it has no idempotents. Therefore, neither does the endomorphism ring of $G$; so, we conclude that $G$ is indecomposable. Furthermore, in virtue of Proposition~\ref{indec}, any indecomposable group is Bassian-finite, and thus $G$ is our pursued example.
\end{proof}

Nevertheless, the constructed above group seems to be neither completely decomposable nor its proper homomorphic image, we now explore in-depth what happens in the situation of so-called Butler groups as defined in \cite{Fu} under the presumption that it is of finite rank: a group $G$ said to be {\it Butler} if it is a pure subgroup of a completely decomposable group or, equivalently, it is an epimorphic image of a completely decomposable group. Specifically, we proceed with the following possible expansions for arbitrary infinitely countable ranks.

\medskip

Our discussion of some of these properties have been straightforward in the case of torsion-free groups of finite rank. If we examine those arguments, they appear to be based upon the following strengthening of the concept ``weakly co-finitely Hopfian", as defined in \cite{DK}, that is clearly satisfied by such groups:

\begin{definition}
The group $G$ is said to have {\it finite injective rank} if, for every endomorphism $\phi:G\to G$, $\phi$ is injective if, and only if, $G/\phi(G)$ is finite.
\end{definition}

Obviously, if $G$ has finite injective rank, then it is weakly co-finitely Hopfian and, in particular, it must be torsion-free and Hopfian. As noted above, if $G$ is torsion-free of finite rank, then it has finite injective rank. On the other hand, the $p$-adic integers, $\hat \Z_p$, will have finite injective rank, though its torsion-free rank is the continuum. The following claim compares this definition with others involving monomorphisms.

\begin{proposition} Suppose $G$ is a group with finite injective rank. Then, the following hold:

(a)  $G$ is relatively co-Hopfian.

(b)  $G$ is Bassian-finite.

(c)  $G$ is co-finitely Hopfian if, and only if, it is divisible of finite rank.
\end{proposition}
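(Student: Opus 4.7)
The plan is to handle (a) and (b) as quick consequences of the finite injective rank (FIR) definition and earlier results in the paper, and to prove (c) in two directions, noting that the substantive work lies in the ``only if'' direction.

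For (a), I would start from an arbitrary monomorphism $\phi:G\to G$. By FIR, $G/\phi(G)$ is finite, say of order $n$. Since $G$ is torsion-free (as remarked just after the definition of FIR), for every nonzero subgroup $B\leq G$ and every $0\neq b\in B$, the element $nb$ is nonzero and lies in $\phi(G)\cap B$. Hence $\phi(G)$ is essential in $G$, so $G$ is relatively co-Hopfian. Part (b) is then an immediate consequence of (a) combined with Corollary~\ref{relativelycH}.

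For the ``if'' direction of (c), divisibility plus finite rank together with the torsion-freeness forced by FIR gives $G\cong \Q^n$ for some $n<\omega$. Any injective endomorphism $\phi$ of $\Q^n$ has cokernel which, by FIR, is finite, and which, as a quotient of a divisible group, is divisible; hence it is trivial. Thus $\phi$ is surjective, and since $\Q^n$ is Hopfian, $\phi$ is an automorphism. So $G$ is co-finitely Hopfian.

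For the ``only if'' direction, assume $G$ has FIR and is co-finitely Hopfian. To get divisibility, I would fix a prime $p$; multiplication by $p$ is injective by torsion-freeness, FIR yields $G/pG$ finite, and the co-finitely Hopfian hypothesis should then upgrade this to an isomorphism, forcing $pG=G$. To get finite rank, I would argue by contradiction: if the torsion-free rank is infinite then, since $G$ is a $\Q$-vector space, one can split off a countable summand $V\cong \Q^{(\omega)}$, write $G=V\oplus W$, and define $\phi$ to act on $V$ by the shift $e_i\mapsto e_{i+1}$ on a fixed basis and as the identity on $W$. Then $\phi$ is an injective endomorphism of $G$ whose cokernel is $\Q$, which is infinite, contradicting FIR. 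The only real obstacle is verifying that the notion of ``co-finitely Hopfian'' from \cite{DK} is the one that makes the divisibility step work; under the natural reading that an injective endomorphism with finite cokernel must be an automorphism, the argument proceeds cleanly.
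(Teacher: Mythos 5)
Your proof is correct and follows essentially the same route as the paper: (a) via the finiteness (hence torsion-ness) of $G/\phi(G)$ forcing essentiality, (b) via Corollary~\ref{relativelycH}, and (c) via multiplication by $p$ for the divisibility half and a $\Q^{(\omega)}$-summand for the finite-rank half. The only cosmetic difference is that the paper delegates the infinite-rank divisible case to the subsequent proposition on infinite direct sums (using that $\Q^{(\omega)}$ is not Hopfian), whereas you handle it self-containedly with an explicit injective shift of infinite cokernel; both are fine.
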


\begin{proof} (a) If $\phi:G\to G$ is a monomorphism, then since $G$ is torsion-free and $G/\phi(G)$ is torsion, $\phi(G)$ is essential in $G$.

(b) follows from (a) and Corollary~\ref{relativelycH}.

(c) Sufficiency being clear, suppose $G$ is not divisible. Then, for some prime $p$, $\phi(x)=px$ will be injective, but not surjective. Thus, $G/\phi(G)$ will be finite, but non-zero, so that $G$ will not be co-finitely Hopfian.

Next, suppose $G$ is divisible, but does not have finite rank. It follows from the next result below that $G$ cannot have finite injective rank, as requested.
\end{proof}

Infinite direct sums can never have finite injective rank as the following assertion shows.

\begin{proposition}
Suppose $G=\bigoplus_{i\in I} A_i$, where each $A_i\ne 0$. If $G$ has finite injective rank, then $I$ is finite.
\end{proposition}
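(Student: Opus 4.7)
The plan is to prove the contrapositive: assuming $I$ is infinite, I will construct an injective endomorphism $\phi:G\to G$ whose cokernel $G/\phi(G)$ is infinite, which directly contradicts the hypothesis of finite injective rank. From the observations preceding the statement I already know that $G$, and hence every nonzero $A_i$, is torsion-free, so multiplication by any nonzero integer is injective on each summand. I will split the construction into two cases depending on how many of the $A_i$ are divisible.

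Suppose first that infinitely many of the $A_i$ are divisible. Let $H$ be the direct sum of the divisible summands and $K$ the direct sum of the rest, so $G=H\oplus K$. Since each divisible torsion-free $A_i$ is a nonzero $\Q$-vector space, $H$ is a $\Q$-vector space of infinite dimension. I will fix a $\Q$-basis of $H$ containing a distinguished sequence of distinct vectors $e_1,e_2,\ldots$ and define a shift $\phi_H:H\to H$ by $\phi_H(e_n)=e_{n+1}$ for all $n\geq 1$ and $\phi_H(b)=b$ for every other basis vector $b$. This $\phi_H$ is clearly $\Q$-linear and injective, and a short check shows that its cokernel is the one-dimensional $\Q$-space spanned by the class of $e_1$, which is infinite as an abelian group. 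Then $\phi:=\phi_H\oplus \mathrm{id}_K$ is the desired endomorphism of $G$.

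Suppose instead that only finitely many of the $A_i$ are divisible, so the set $J$ of indices $i$ with $A_i$ non-divisible is infinite. For each $i\in J$ I pick a prime $p_i$ with $A_i\ne p_iA_i$, and define $\phi:G\to G$ block-diagonally by letting $\phi|_{A_i}$ be multiplication by $p_i$ for $i\in J$ and the identity for $i\notin J$. Each block is injective because each $A_i$ is torsion-free, so $\phi$ is injective; and $G/\phi(G)\cong \bigoplus_{i\in J}(A_i/p_iA_i)$ has infinitely many nonzero summands and is therefore infinite.

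Either case produces an injective endomorphism of $G$ with infinite cokernel, contradicting finite injective rank, so $I$ had to be finite. The main subtlety is precisely the dichotomy above: a uniform block-diagonal multiplication-by-integer construction cannot handle divisible summands, since any nonzero integer is invertible on a $\Q$-vector space, while a single shift on a common infinite-dimensional structure is unavailable when the $A_i$ need not admit any nonzero homomorphisms between distinct blocks. Splitting along whether infinitely many $A_i$ are divisible is what reconciles these two obstacles.
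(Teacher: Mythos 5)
Your proof is correct and follows essentially the same two-case strategy as the paper, which splits on whether infinitely many $A_i$ are non-reduced (yielding a $\Q^{(\omega)}$ summand, hence non-Hopficity) versus reduced (where it uses the identical block-diagonal multiplication by primes $p_i$ extended by the identity). Your only deviation is handling the divisible case by an explicit shift with cokernel $\Q$ instead of citing the non-Hopficity of $\Q^{(\omega)}$; both are valid, and your appeal to torsion-freeness of $G$ is legitimately covered by the paper's earlier observation that finite injective rank forces $G$ to be torsion-free.
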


\begin{proof}
Suppose the contrary that $I$ is infinite. If $A_i$ is not reduced for an infinite collection of $i\in I$, then $G$ has a direct summand isomorphic to $\Q^{(\omega)}$. Since this summand is not Hopfian, neither is $G$, which shows that $G$ does not have finite injective rank.

Otherwise, $G$ will have a direct summand of the form $G':=\bigoplus_{i<\omega}A_i$, where, for each $i<\omega$, there is a prime $p_i$ such that $p_i A_i\ne A_i$. If $\phi': G'\to G'$ is multiplication by $p_i$ on each $A_i$, then $\phi$ will be injective, but $G'/\phi'(G')$ will be infinite. Setting $\phi$ equaling $\phi'$ on $G'$ and the identity on a complementary direct summand, shows that $G$ does not have finite injective rank, as required.
\end{proof}

As an automatic consequence, we derive:

\begin{corollary} Suppose $G$ is a completely decomposable torsion-free group. Then, $G$ has finite injective rank if, and only if, it has finite torsion-free rank.
\end{corollary}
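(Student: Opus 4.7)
The plan is to derive both implications directly from results established just above, with no further computation needed. The backward direction is the trivial one, and the forward direction is an immediate application of the preceding proposition on infinite direct sums.

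For the ``if'' direction, suppose $G$ has finite torsion-free rank. Since $G$ is a completely decomposable torsion-free group of finite rank, $G$ is in particular a torsion-free group of finite rank, and the paper has already noted (in the discussion leading to the definition of finite injective rank, and implicit in Proposition~\ref{indec}(a) together with the remarks that follow) that every torsion-free group of finite rank has finite injective rank. Hence $G$ has finite injective rank.

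For the ``only if'' direction, I would argue by contrapositive. Suppose $G=\bigoplus_{i\in I} A_i$ with each $A_i$ of rank one is completely decomposable but does not have finite torsion-free rank. Then the index set $I$ must be infinite, since the torsion-free rank of $G$ equals $|I|$ (each $A_i\neq 0$ contributes exactly one to the rank). Each $A_i$ is nonzero because it has rank one, so the preceding proposition applies directly: a direct sum $\bigoplus_{i\in I}A_i$ of nonzero groups indexed by an infinite set cannot have finite injective rank. Therefore $G$ does not have finite injective rank, which completes the contrapositive.

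The only subtlety worth naming is the bookkeeping that ``finite torsion-free rank'' for a completely decomposable group is literally the same as ``finite index set $I$'' in a rank-one decomposition; this is immediate once one recalls that each $A_i$ has rank exactly one. Thus no step here is genuinely difficult, and the corollary reduces to invoking the two results cited above.
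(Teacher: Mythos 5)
Your proof is correct and follows exactly the route the paper intends: the paper presents this corollary as an ``automatic consequence'' of the preceding proposition on infinite direct sums (for the forward direction) together with the earlier remark that torsion-free groups of finite rank have finite injective rank (for the backward direction). Nothing further is needed.
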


Furthermore, as mentioned before, $\hat \Z_p$ has finite injective rank, but not finite torsion-free rank. Are there broader classes of groups for which, like the last corollary, this must hold? In particular, if $G$ is a Butler group with finite injective rank, does $G$ have finite torsion-free rank? We will show that this surprisingly fails; i.e., there is a Butler group $R$ with finite injective rank, but countably infinite torsion-free rank.

\medskip

The following group appeared in \cite{AR} and we review its construction, though we will make a number of additional observations about it. Let $W$ be the collection of all ``words" in the alphabet $\{0,1\}$ (including $\emptyset$).  We will think of such a word representing, in base 2, an $n<\omega= \{0,1,2, \dots\}$ (agreeing that $\emptyset, 0, 00, \dots$ all represent 0).

If $w\in W$, let $\lambda(w)$ denote its length. So, for example, $0110$ and $110$ both represent the number 6, though they are different words, since they have different lengths. Set
$$
       L_w:=\{n<\omega: n\equiv w \pmod {2^{\lambda(w)}};
$$
So, $L_\emptyset = \N$ and $L_{1011}=\{11, 27, 43, 59, \dots\}$. If $n<\omega$, then $\{L_{0w}, L_{1w}\}$ is a partition of $L_{w}$, and
$$
             \{ L_w:w\in W, \lambda(w)=n\}
$$
is a partition of $\omega$.

For $w\in W$, define $\Q_w\leq \Q$ as follows: Let $\mathcal P=\{p_0, p_1, p_2, \dots\}$ be the collection of all primes. Put
$$P_w:=\{p_i\in {\mathcal P}: i\in L_w\}\subseteq \mathcal P,$$
and let $\Q_w$ be the integers localized at $P_w$. So, $q\in \Q_w$ uniquely when we can write $q=\frac{a}{b}$, where $(p, b)=1$ for all $p\in P_w$; in fact, it is well known that $\Q_w$ is a PID with primes $p\in P_w$. Note that $\Q_\emptyset=\Z$, $\Q_{0w}\cap\Q_{1w}=\Q_w$ and $\Q_{0w}+\Q_{1w}=\Q$ (see, for more details, \cite[Proposition~1]{AR}).

For each $n<\omega$, introduce a ring $R_n$ as follows: Let $w_1$, $w_2$, \dots, $w_{2^n}$ be the collection of all words of length $n$. Define
$$
R_n:=\bigoplus_{\lambda (w)=n} \Q_w\bbv e_w=\Q_{w_1}\bbv e_{w_1} \oplus \Q_{w_2}\bbv e_{w_2}\oplus \cdots \oplus \Q_{w_{2^n}}\bbv e_{w_{2^n}},
$$
where the $\bbv e_w$s are (orthogonal) idempotents. So, $\bbv e_{w_1}+\cdots + \bbv e_{w_{2^n}}$ is the multiplicative identity of $R_n$. Notice that $R_0=\Z \bbv e_{\emptyset}\cong \Z$, and we identify $\bbv e_\emptyset$ with the number 1.

If $n<\omega$, then, for all $w\in W$ of length $n$, if we map $\bbv e_{w}\mapsto \bbv e_{0w}+\bbv e_{1w}$, then this extends to an injective (unitary) ring homomorphism $R_n\to R_{n+1}$. If we interpret these all as inclusions, then since $\Q_w=\Q_{0w}\cap \Q_{1w}$, we can conclude that, as torsion-free Abelian groups, $R_n$ is a pure subgroup of $R_{n+1}$. The union is the direct limit of the $R_n$, that is, $R:= \cup_{n<\omega} R_n$ will be a ring. Since, as Abelian groups, each $R_n$ will be a reduced pure subgroup of $R$, it follows that $R$ is also reduced. Manifestly, for each $n$, we have
$$
            1 = \sum_{\lambda(w)=n} \bbv e_w.
$$
Further, for a fixed $n<\omega$, there will be an $R$-module decomposition:
$$
          R=\bigoplus_{\lambda (w)=n} R\bbv e_w=R \bbv e_{w_1}\oplus R \bbv e_{w_2}\oplus \cdots \oplus R \bbv e_{w_{2^n}}.
$$
It is also readily checked that, for $w, v\in W$ with $\lambda (w)=n\leq \lambda (v)$, we then can calculate
$$
            \bbv e_w\cdot \bbv e_v=\begin{cases} \bbv e_v, & {\rm if}\ v\restriction_n=w;\cr
                                                    0,     & {\rm otherwise},\cr
                                   \end{cases}
$$
and, moreover, that each
$$R\bbv e_w=\sum_{\stackrel{\lambda(v)\geq n}{v\restriction_n=w}}\Q_v\bbv e_v$$
will be a module over $\Q_w$.

\medskip

Unlike the approach in \cite {AR}, our discussion of $R$ will be centered on the following interesting property:

\begin{lemma} The ring $R$ is an E-ring, i.e., for every {\bf group} endomorphism $\phi:R\to R$, there is an $\alpha \in R$ such that
$$
                     \phi(x)=\alpha x, \ \ \ {\rm for\ all\ }x\in R
$$
(see the exact definition in \cite [Section~18.6]{F}).
\end{lemma}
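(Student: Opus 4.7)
The strategy is to set $\alpha := \phi(1)$ and show that $\phi(x) = \alpha x$ for every $x \in R$. Because $R = \bigcup_n R_n = \bigcup_n \bigoplus_{\lambda(w) = n} \Q_w \bbv e_w$, additivity of $\phi$, torsion-freeness of $R$, and the observation that each $R\bbv e_w$ is naturally a $\Q_w$-module containing $\bbv e_w$ will reduce the task to proving, for every word $w \in W$, the single identity $\phi(\bbv e_w) = \alpha \bbv e_w$.

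The heart of the argument is the claim that $\phi(\bbv e_w) \in R\bbv e_w$. Since any group homomorphism cannot decrease $p$-heights, and since $\bbv e_w \in \Q_w \bbv e_w$ with the non-units of $\Q_w$ being exactly the primes in $P_w$, the element $\bbv e_w$ is infinitely $p$-divisible in $R$ for every prime $p \notin P_w$, and therefore so is $\phi(\bbv e_w)$. Now fix $v \neq w$ of length $\lambda(w)$. Because $L_v \cap L_w = \emptyset$, we have $P_v \cap P_w = \emptyset$, so every $p \in P_v$ lies outside $P_w$. Using the direct sum decomposition $R = \bigoplus_{\lambda(u) = \lambda(w)} R\bbv e_u$, $p$-divisibility passes to each direct summand, so the component $b := \pi_v(\phi(\bbv e_w)) \in R\bbv e_v$ is infinitely $p$-divisible in $R\bbv e_v$ for every $p \in P_v$.

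The main obstacle is showing that such a $b$ must be $0$. The plan is to pick $n'$ with $b \in R_{n'} \bbv e_v$, expand $b = \sum_u b_u \bbv e_u$ over the extensions $u$ of $v$ of length $n'$ (with $b_u \in \Q_u$), and show each coefficient vanishes. Given such a $u$, choose $i \in L_u$ and set $p := p_i \in P_u \subseteq P_v$; then $u$ is the unique level-$n'$ extension of $v$ with $p \in P_u$, while $p$ is a unit in every other $\Q_{u'}$ at that level. Hence the condition $b \in \bigcap_k p^k R\bbv e_v$ forces $b_u$, viewed as a rational number, to have unbounded $p$-adic valuation, so $b_u = 0$. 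Running this over all $u$ yields $b = 0$, and thus $\phi(\bbv e_w) \in R\bbv e_w$.

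Finally, applying $\phi$ to $1 = \sum_{\lambda(v) = \lambda(w)} \bbv e_v$ gives $\alpha = \sum_v \phi(\bbv e_v)$ with each summand in its own direct summand $R\bbv e_v$, so uniqueness of the decomposition forces $\phi(\bbv e_w) = \alpha \bbv e_w$. The extension to $\phi(q \bbv e_w) = \alpha(q \bbv e_w)$ for all $q \in \Q_w$ is then routine via additivity together with torsion-freeness of $R$ and the $\Q_w$-module structure of $R \bbv e_w$, and additivity across the summands of each $R_n$ finishes the proof on all of $R$.
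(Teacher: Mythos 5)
Your proof is correct, but it takes a genuinely different route from the paper. The paper invokes the criterion of \cite[Theorem~18.6.3(ii)]{F}, reducing the E-ring property to showing that any group endomorphism with $\phi(1)=0$ vanishes; this is done by factoring such a $\phi$ through $R/\Z$ and proving that $R/\Z$ is divisible (via the computation $R_{n+1}/R_n\cong \Q^{(2^n)}$ and induction), while $R$ is reduced. You instead verify the E-ring identity $\phi(x)=\phi(1)x$ directly, the key point being that $\phi(\bbv e_w)\in R\bbv e_w$: since $\bbv e_w$ is infinitely $p$-divisible for every $p\notin P_w$ and $P_v\cap P_w=\emptyset$ for distinct words of equal length, the component of $\phi(\bbv e_w)$ in $R\bbv e_v$ is infinitely $p$-divisible for all $p\in P_v$, and your coefficient analysis correctly kills it; one should just note explicitly that the witnesses to $p^k$-divisibility may live in $R_m\bbv e_v$ for $m$ larger than the level of $b$, but since $p=p_i$ remains a non-unit precisely on the unique branch through the index $i$, the coefficient $b_u$ still acquires $p$-adic valuation at least $k$ at every stage. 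Both arguments ultimately rest on the same structural features ($\Q_{0w}+\Q_{1w}=\Q$ versus $\Q_w$ being the localization at $P_w$); the paper's version is shorter and reuses a standard criterion, whereas yours is self-contained, avoids the reference to Fuchs, and has the side benefit of exhibiting explicitly how $\phi$ acts on the idempotents.
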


\begin{proof} Looking at \cite[Theorem~18.6.3(ii)]{F}, it suffices to show that, if $\phi$ is a group endomorphism of $R$ such that $\phi(1)=0$, then $\phi=0$; so assume $\phi$ is such an endomorphism.

Observes that $\phi$ will induce a homomorphism $\overline \phi: R/\Z\to R$, and it will suffice just to prove that $\overline \phi=0$. And since $R$ itself is reduced, it will be enough to show that $R/\Z$ is (torsion-free) divisible. Indeed, since
$$
                     R/\Z= \bigcup_{n<\omega} R_n/\Z,
$$
we need to prove that each $R_n/\Z$ is divisible. We establish this by induction on $n$, using the (exact) sequence:
$$
          0 \to R_n/\Z\to R_{n+1}/\Z\to R_{n+1}/R_n \to 0.
$$
That is, if we can show that each $R_{n+1}/R_n$ is divisible, then our inductive step from the $n$th case to the $n+1$st case holds, thus giving the whole result.

\medskip

To that end, we subsequently compute that
\begin{align*}
       R_{n+1}/R_n&=\left(\bigoplus_{ {\lambda(w)=n+1}} \Q_w\bbv e_w\right)/ \left(\bigoplus_{\lambda(w)=n} \Q_w\bbv e_w\right)\cr
                  &  \cong \bigoplus_{ {\lambda(w)=n}} \left[ (\Q_{0w}\bbv e_{{0w}}\oplus\Q_{1w}\bbv e_{{1w}})/ \Q_w(\bbv e_w)\right]\cr
                  &  =\bigoplus_{ {\lambda(w)=n}}  \left[(\Q_{0w}\bbv e_{{0w}}\oplus\Q_{1w}\bbv e_{{1w}})/ \Q_w(\bbv e_{0w}+\bbv e_{1w})\right]\cr
                  &  \cong \bigoplus_{{\lambda(w)=n}} (\Q_{0w}+\Q_{1w})= \bigoplus_{{\lambda(w)=n}} \Q= \Q^{(2^n)}\cr
\end{align*}
is divisible, as wanted.
\end{proof}

The same proof demonstrates that, for all words $w\in W$, the quotient $R \bbv e_w/\Q_w \bbv e_w$ is divisible; the above argument was just for $w=\emptyset$. However, it is easy to see that $R$ is commutative, since it is the union of the commutative rings $R_n$.

This brings us to our main reason for being interested in this construction as recorded in the following.

\begin{proposition} As a group, $R$ has finite injective rank.
\end{proposition}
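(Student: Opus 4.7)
The plan is to use the E-ring lemma just proved, which reduces the finite-injective-rank property of $R$ to a statement about multiplication maps. Since every group endomorphism of $R$ has the form $x \mapsto \alpha x$ for some $\alpha \in R$, I must verify that for each $\alpha \in R$, multiplication by $\alpha$ is injective if and only if $R/\alpha R$ is finite. Fix such an $\alpha$, choose $n$ with $\alpha \in R_n$, and write $\alpha = \sum_{\lambda(w)=n} \alpha_w \bbv e_w$ with $\alpha_w \in \Q_w$. Using the orthogonality of $\bbv e_w$ and $\bbv e_v$ for distinct words of equal length $n$, the decomposition $R = \bigoplus_{\lambda(w)=n} R\bbv e_w$ is stable under multiplication by $\alpha$, which acts on the summand $R\bbv e_w$ as multiplication by the scalar $\alpha_w$; consequently
\[
R/\alpha R \;\cong\; \bigoplus_{\lambda(w)=n} R\bbv e_w/\alpha_w R\bbv e_w.
\]

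If some coordinate $\alpha_w$ vanishes, then $\alpha \bbv e_w = 0$ shows multiplication by $\alpha$ is not injective, while the corresponding summand $R\bbv e_w$ sits intact as an infinite piece of $R/\alpha R$; both defining conditions fail simultaneously. It therefore remains to treat the case where every $\alpha_w$ is nonzero and to prove that each $R\bbv e_w/\alpha_w R\bbv e_w$ is finite.

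For that, I would recall from the remark following the previous lemma that $R\bbv e_w/\Q_w \bbv e_w$ is divisible; moreover, running the very same inductive computation on the chain $\{R_m \bbv e_w : m \ge \lambda(w)\}$ identifies each successive quotient with a finite direct sum of copies of $\Q$, so $R\bbv e_w/\Q_w \bbv e_w$ is actually torsion-free divisible and hence a $\Q$-vector space. Multiplication by the nonzero scalar $\alpha_w \in \Q_w$ is thus bijective on this quotient, and applying the snake lemma to
\[
0 \to \Q_w \bbv e_w \to R\bbv e_w \to R\bbv e_w/\Q_w \bbv e_w \to 0
\]
with vertical maps all equal to multiplication by $\alpha_w$ collapses to an isomorphism $R\bbv e_w/\alpha_w R\bbv e_w \cong \Q_w/\alpha_w \Q_w$. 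Since $\Q_w$ is a PID whose primes are precisely those in $P_w$, the nonzero element $\alpha_w$ generates an ideal of finite index in $\Q_w$, so this quotient is finite. Summing over the finitely many words of length $n$ shows that $R/\alpha R$ is finite.

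The step I expect to require the most care is establishing that $R\bbv e_w/\Q_w \bbv e_w$ is \emph{torsion-free} divisible rather than merely divisible: without torsion-freeness the induced multiplication-by-$\alpha_w$ map on the quotient need not be injective, and the clean reduction to $\Q_w/\alpha_w \Q_w$ breaks down. This ought to be immediate from the literal analog, with $\Q_w$ playing the role of $\Z = \Q_\emptyset$, of the telescoping calculation $R_{n+1}/R_n \cong \Q^{(2^n)}$ carried out inside the previous lemma's proof.
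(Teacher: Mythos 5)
Your proof is correct and follows essentially the same route as the paper's: reduce via the E-ring lemma to multiplication maps $\phi_\alpha$, split $R$ along the idempotents of a fixed length $n$, and use the exact sequence $0\to \Q_w\bbv e_w\to R\bbv e_w\to (R\bbv e_w)/(\Q_w\bbv e_w)\to 0$ with torsion-free divisible cokernel to identify $R\bbv e_w/\gamma_w R\bbv e_w$ with the finite group $\Q_w/\gamma_w\Q_w$ when $\gamma_w\ne 0$. You merely make explicit, via the snake lemma and the observation that the divisible quotient is a $\Q$-vector space, a step the paper's proof states without detail.
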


\begin{proof}
If $\alpha\in R$, let $\phi_\alpha(x)=\alpha x$ for all $x\in R$. It follows from $R=\cup_{n<\omega} R_n$ that, if $\phi_\alpha$ is an arbitrary group endomorphism of $R$, then there is an $n\in \N$ such that $\alpha\in R_n$, so that there are $\gamma_w\in \Q_w$ ($\lambda(w)=n$) such that
$$
                     \alpha = \sum_{\lambda(w)=n} \gamma_w \bbv e_w,   \eqno{(\dag)}
$$
where $\gamma_w=\alpha\cdot \bbv e_w$.
In this representation, we can read off several properties that $\phi_\alpha$ may or may not have:

\medskip

(a) $\phi_\alpha$ is injective if and only if $\alpha$ is not a zero divisor, if and only if each $\gamma_w\ne 0$.

\medskip

(b) $\phi_\alpha$ is bijective if and only if $\alpha$ is a unit if and only, if each $\gamma_w$ is a unit of $\Q_w$, if and only if multiplication by $\alpha$ is surjective; in particular, the group {\bf $R$ is Hopfian} in the usual sense.

\medskip

(c) $\phi_\alpha$ is an idempotent if and only if each $\gamma_w$ is either 0 or 1 in $\Q_w$.

\medskip

Assume again that $\alpha$ is as in $(\dag)$. For each $w\in W$ in that sum, there is an exact sequence
$$0 \to \Q_w \bbv e_w\to R\bbv e_w \to (R\bbv e_w)/(\Q_w \bbv e_w)\to 0,$$
where, again, the right-hand group is isomorphic to $\Q^{(\kappa)}$ for some cardinal $\kappa$. It follows from this that
$$
              R\bbv e_w /\gamma_w R\bbv e_w \cong \begin{cases}  \Q_w \bbv e_w/\gamma_w \Q_w \bbv e_w \cong \Q_w/ \gamma_w \Q_w, & {\rm if }\  \gamma_w\ne 0\cr R\bbv e_w, & {\rm if }\ \gamma_w= 0\cr\end{cases}
$$
It, therefore, follows also that
$$
        R/\alpha R \cong \bigoplus_{\lambda(w)=n} R\bbv e_w /\gamma_w R\bbv e_w
$$
is finite if and only if each $\gamma_w\ne 0$, if and only if $\alpha$ is not a zero divisor, if and only if $\phi_\alpha$ is injective, as asked for.
\end{proof}

In \cite{AR} it was observed that, as an additive Abelian group, this group $R$ {\it is a $B_2$-group, and hence a $B_1$-group} (this followed since $R$ is the union of the pure subgroups $R_n$, each of which is clearly Butler). On the other hand, a pretty simple argument showed that this $R$ {\it cannot be embedded as a pure subgroup of a completely decomposable group} (see \cite [Example~1]{AR}).

\medskip

In addition, in \cite[Theorem~14.9.3]{F} it was noted that such a construction results in a group that is {\it super-decomposable} in the sense that $R$ does {\bf not} have a non-zero indecomposable direct summand -- in fact, the above characterization of idempotents in $R$, together with the relation $\bbv e_w=\bbv e_{0w}+\bbv e_{1w}$, easily implies that any non-zero idempotent is the sum of two other, non-zero idempotents.

\medskip

So, for example, comparing with Proposition~\ref{indec}, $R$ is a Bassian-finite group even though it satisfies none of the three conditions listed there.

\medskip

Next, observe that the group $R$ {\it does not} {\bf satisfy the ascending type condition}; because, for instance,
$$\tau(\bbv e_\emptyset)<\tau(\bbv e_0)<\tau(\bbv e_{00})<\tau(\bbv e_{000})<\tau(\bbv e_{0000})<\cdots ,$$ that is,
$$\tau(\Z)<\tau(\Q_0)<\tau(\Q_{00})<\tau(\Q_{000})<\tau(\Q_{0000})<\cdots.$$

Now, comparing with Proposition~\ref{cdconditions} stated and proved above, the so-constructed group $R$ satisfies (c), and hence (a), but however does {\it not} satisfy (b). So, a countable Butler group, may be relatively co-Hopfian without having the ascending type property. And certainly a homogeneous completely decomposable group of countably infinite rank will have the ascending type property without being Bassian-finite, so that it is also {\it not} relatively co-Hopfian.

\medskip

Thereby, we arrive at the following statement.

\begin{example}\label{but} There is a Butler group of infinitely countable rank which is Bassian-finite, but {\it not} completely decomposable.
\end{example}

\section{Related Concepts}

Similarly to the notion of a ``{\it Bassian-finite}" group, it is quite logical to consider the reciprocal one of a ``{\it co-Bassian-finite}" group $G$, which means that there exists no epimorphism from $G$ to any proper direct summand of $G$.

\medskip

The usual categorical arguments again shows that a direct summand of a group with this property retains that property. However, the above definition of {\it co-Bassian-finite} is sort of trivial as it stands:

\begin{proposition}
A group is co-Bassian-finite if, and only if, it is indecomposable.
\end{proposition}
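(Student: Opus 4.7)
The plan is to verify both implications directly from the definition, exploiting the fact that the canonical projection onto any direct summand is automatically surjective.

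For the direction ``indecomposable $\Rightarrow$ co-Bassian-finite'', I would observe that an indecomposable group $G$ admits no decomposition $G=A\oplus B$ with both $A$ and $B$ nonzero, and hence has no nonzero proper direct summand at all. The defining condition of co-Bassian-finite thus holds vacuously.

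For the converse, I would argue by contrapositive: if $G$ is decomposable, pick a decomposition $G=A\oplus B$ with $A,B\ne\{0\}$. Then $A$ is a proper (nonzero) direct summand, and the canonical projection $\pi:G\to A$ is, by construction, a surjective homomorphism. Thus $\pi$ witnesses that $G$ fails to be co-Bassian-finite.

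The only real subtlety---and the reason the authors describe the notion as ``sort of trivial as it stands''---is that, in contrast to the Bassian-finite setting (where the absence of a monomorphism $G\to\{0\}$ for $G\ne 0$ makes the zero summand harmless), the zero map $G\to\{0\}$ is automatically an epimorphism. One must therefore read ``proper direct summand'' in the definition as requiring the summand to be both proper and \emph{nonzero}; otherwise every nonzero group would trivially fail to be co-Bassian-finite via the zero map onto $\{0\}$. Once this convention is in place, the argument above carries through without obstacle.
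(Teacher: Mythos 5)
Your proof is correct and follows essentially the same route as the paper: vacuous truth for the indecomposable direction, and the canonical projection onto a nonzero summand for the converse. Your added remark about excluding the zero summand (since the zero map $G\to\{0\}$ is always surjective) is a legitimate clarification of a convention the paper leaves implicit, but the core argument is identical.
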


\begin{proof}
If $G$ is an indecomposable group, then it has no proper direct summands, and so is vacuously co-Bassian-finite.

Conversely, if the co-Bassian-finite group $G$ is assumed in a way of contradiction to be not indecomposable, then there is a decomposition $G=A\oplus B$, where $A\ne \{0\}\ne B$. Then, the natural projection $G\to A$ is an epimorphism, so that $G$ is really not co-Bassian-finite, as expected.
\end{proof}

\medskip

\end{document}